\newtheoremstyle{custom}
  {3pt}
  {3pt}
  {\slshape}
  {}
  {\bfseries}
  {.}
  { }
   {}
\theoremstyle{custom}
\newtheorem{theorem}{Theorem}[section]
\newtheorem{proposition/definition}[theorem]{Proposition/Definition}
\newtheorem{corollary}[theorem]{Corollary}
\newtheorem{conjecture}[theorem]{Conjecture}
\theoremstyle{definition}
\newtheorem{example}[theorem]{Example}
\theoremstyle{remark}
\newtheorem{remark}[theorem]{Remark}
\newtheoremstyle{exercise}
  {3pt}
  {6pt}
  {}
  {}
  {\bfseries}
  {:}
  { }
   {}
\theoremstyle{exercise}
\newtheorem{exercise}[theorem]{Exercise}
\newtheoremstyle{exercises}
  {3pt}
  {6pt}
  {}
  {}
  {\bfseries}
  {:}
  {\newline}
   {}
\theoremstyle{exercise}
\newtheorem{exercises}[theorem]{Exercises}
\def\boxit#1{\vbox{\hrule height1pt\hbox{\vrule width1pt\kern3pt
  \vbox{\kern3pt#1\kern3pt}\kern3pt\vrule width1pt}\hrule height1pt}}
\def\trank{\text{rank}}
\def\BC{\mathbb C}
\def\tdim{{\rm dim}}
\def\hd{,...,}
\def\TH{\Theta}
\def\11{\mathbf 1}
\def\o{\omega}
\def\s{\sigma}
\def\d{\delta}
\def\ot{{\mathord{ \otimes } }}
\def\ra{{\mathord{\;\rightarrow\;}}}
\def\dim{{\rm dim}\;}
\def\ep{\epsilon}
\def\s{\sigma}
\def\t{\tau}
\def\TH{\Theta}
\def\FS{\mathfrak  S}
\def\ol{\overline}
\def\BC{\mathbb  C}
\def\ep{\epsilon}
\def\ci{\mathcal  I}
\def\hd{, \hdots ,}
\def\ra{\rightarrow}
\def\tdet{\operatorname{det}}
\def\tperm{\operatorname{perm}}
\def\tend{\operatorname{End}}
\def\tdim{\operatorname{dim}}
\def\trank{\operatorname{rank}}
\def\be{\begin{equation}}
\def\ene{\end{equation}}
\DeclareMathOperator{\tlog}{log}
\DeclareMathOperator\tspan{span}
\def\vp{{\bold V\bold P}}
\def\vnp{{\bold V\bold N\bold P}}
\def\tspan{{\rm span}}
\newcommand{\Id}{\operatorname{Id}}
\def\trank{{\mathrm {rank}}}
\def\Th{\Theta}
\def\tln{\operatorname{ln}}
\begin{document}
\title[Shifted partial derivatives cannot separate]%
{The method of shifted partial derivatives cannot separate the permanent from the determinant}

\author{Klim Efremenko}
\address{Simons Institute for Theoretical Computing, Berkeley}
\email{klimefrem@gmail.com}

\author{J.M. Landsberg}
\thanks{Landsberg supported by NSF    DMS-1405348}
\address{Department of Mathematics, Texas A\&M University}
\email{jml@math.tamu.edu}

\author{Hal Schenck}
\thanks{Schenck supported by NSF DMS-1312071}
\address{Department of Mathematics, University of Illinois}
\email{schenck@math.uiuc.edu}

\author{Jerzy Weyman}
\thanks{Weyman supported by NSF DMS-1400740}
\address{Department of Mathematics, University of Connecticut}
\email{jerzy.weyman@gmail.com}

\subjclass[2001]{68Q17, 13D02, 14L30, 20B30}
\keywords{Computational Complexity, Free Resolution, Determinant, Permanent}

\begin{abstract}
 The method of shifted partial
 derivatives   
  introduced in
 \cite{DBLP:journals/eccc/Kayal12,gupta4}  was used to prove a super-polynomial lower bound
 on the size of depth four circuits needed to compute the permanent.
 We show that this method alone cannot   prove that the padded permanent 
 $\ell^{n-m}\tperm_m$ cannot
 be realized inside the $GL_{n^2}$-orbit closure of the determinant $\tdet_n$
 when $n>2m^2+2m$. Our proof relies on several simple degenerations
 of the determinant polynomial, Macaulay's theorem that gives a lower bound on the
 growth of an ideal,  and a lower bound estimate from \cite{gupta4}
 regarding the shifted partial derivatives of the determinant.
\end{abstract}
 
\maketitle

\section{Introduction}

Let $\FS_m$ denote the permutation group on $m$ elements and let 
$y^i_j$ be linear coordinates on $\BC^{m^2}$. The permanent polynomial is
$$
\tperm_m(y^i_j)=\sum_{\s\in \FS_m} y^1_{\s(1)}\cdots y^m_{\s(m)}.
$$
Valiant's famous conjecture $\vp\neq\vnp$ may be phrased as:

\begin{conjecture}\label{valconb} \cite{vali:79-3} There does not
exist a polynomial size circuit computing the permanent.
\end{conjecture}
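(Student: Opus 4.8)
The plan is not to attack the Boolean statement head-on but to route through Valiant's algebraic reformulation $\vp\neq\vnp$. Using his completeness theorem for the permanent together with the standard depth and determinant reductions, Conjecture~\ref{valconb} reduces to a single non-containment of orbit closures: for every polynomial bound $n=n(m)$ and all $m\gg 0$, the padded permanent $\ell^{n-m}\tperm_m$ does not lie in $\oldetc$, the $GL_{n^2}$-orbit closure of $[\tdet_n]$; equivalently, the determinantal complexity $\dc(\tperm_m)$ grows super-polynomially. So the first step is to fix this reformulation and reduce the problem to exhibiting, for each such $n$, an \emph{obstruction}: a quantity defined intrinsically on polynomials, monotone (or invariant) under the $GL_{n^2}$-action and under degeneration to the boundary of an orbit, provably large on $\tperm_m$, and provably small on $[\tdet_n]$.

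Next I would search for such an obstruction. One route is representation-theoretic: produce a Schur module $S_\lambda\BC^{n^2}$ occurring in the coordinate ring of $\ol{GL_{n^2}\cdot[\ell^{n-m}\tperm_m]}$ but not in that of $\oldetc$ --- an occurrence obstruction in the sense of geometric complexity theory --- which would demand control of the relevant plethysm and Kronecker coefficients for both sequences. The route closer in spirit to the present paper is analytic and combinatorial: choose a linear subspace of the order-$k$ partial derivatives of a form, multiply it by a high power of the maximal ideal, and take the dimension of the resulting graded piece; one then wants a variant $\mu$ of this ``shifted partial derivatives'' measure with $\mu(\tperm_m)$ large and $\mu(v)$ small for every $v\in\oldetc$. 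The technical core would be a lower bound on $\mu(\tperm_m)$ that exploits the $\FS_m\times\FS_m$ symmetry and the exponentially many monomials of the permanent, matched against an upper bound for the determinant obtained by first degenerating $[\tdet_n]$ to a tractable normal form --- a monomial, or a product of a few variables --- and then controlling $\mu$ along that degeneration by Macaulay's growth theorem, exactly as in the estimates invoked in this paper.

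The hard part --- and the reason this is the outstanding open problem rather than a routine lemma --- is the very existence of $\mu$. The present paper shows that the most natural candidate, the dimension of the span of order-$k$ shifted partials, \emph{fails}: once $n>2m^2+2m$ the shifted-partials space of $\tdet_n$ is already as large as that of a generic form of the same degree, so it cannot separate $\ell^{n-m}\tperm_m$ from $\oldetc$. Hence any successful $\mu$ must be strictly finer: it must detect arithmetic features of the permanent's coefficients rather than merely the geometry of its hypersurface, while remaining small on the entire orbit closure of $\tdet_n$, which --- being so degenerate inside a large ambient space --- looks ``generic'' to every known coarse invariant. I expect closing this gap to require either a genuinely new equivariant invariant sensitive to that extra structure (a normalized syzygy or local-cohomology invariant of the Jacobian ideal that the determinant provably cannot realize), or an entirely different, non-orbit-closure attack: a direct super-polynomial lower bound on general arithmetic circuits via the depth-four reduction, using a depth-four lower bound method that sidesteps precisely the barrier isolated here.
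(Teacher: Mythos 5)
There is a genuine gap here: what you have written is a research program, not a proof, and you say so yourself. The statement is Valiant's conjecture, which the paper does not prove --- it is cited from \cite{vali:79-3} as an open problem, and the whole point of the paper is a \emph{negative} result, namely that the most natural candidate invariant (the rank of shifted partial derivative maps) provably cannot certify the required non-containment once $n>2m^2+2m$. Your proposal correctly identifies that any proof along these lines needs an obstruction $\mu$ that is small on the orbit closure of $\tdet_n$ and large on the padded permanent, but it never constructs such a $\mu$, never proves the two required bounds, and explicitly concedes that ``the very existence of $\mu$'' is the outstanding open problem. A reduction of an open problem to a hypothetical invariant with unverified properties is not a proof of the original statement; every step after ``search for such an obstruction'' is conditional.

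There is also a quantitative error in the reduction you do carry out. Conjecture~\ref{valconb} is \emph{not} equivalent to super-polynomial growth of the determinantal complexity $\dc(\tperm_m)$, nor to the orbit-closure statement $[\ell^{n-m}\tperm_m]\notin\oldetc$ for all polynomial $n(m)$. The implication runs the other way: since $\tdet_n$ has polynomial-size circuits, Conjecture~\ref{valconb} implies Conjecture~\ref{valrephrase}, but the converse fails, because a polynomial-size circuit for $\tperm_m$ only yields a quasi-polynomial bound on $\dc(\tperm_m)$. So even granting your hypothetical $\mu$ and the bound $\dc(\tperm_m)=m^{\omega(1)}$, you would have separated the permanent from small determinants (roughly $\vp_{ws}$ versus $\vnp$), not ruled out polynomial-size circuits; for the statement as written you would need super-quasi-polynomial lower bounds on $\dc(\tperm_m)$, or a correctly quantified depth-four argument of the type \cite{gupta4} attempted --- which is precisely the route whose natural complexity measure this paper shows to be insufficient.
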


Let $W=\BC^N$ with linear coordinates $x_1\hd x_N$, let $W^*$ denote the dual vector space,
let $S^nW$ denote the space of degree $n$ homogeneous
polynomials on $W^*$, and let $Sym(W)=\bigoplus_nS^nW$.
Let $\tend(W)$ denote the space of endomorphisms of $W$, so in particular if
$P\in S^nW$, $\tend(W)\cdot P\subset S^nW$ is the set of homogeneous degree $n$
polynomials obtainable by linear specializations of the variables $x_1\hd x_N$   in $P(x_1\hd x_N)$.

Since the determinant $\tdet_n\in S^n\BC^{n^2}$ is in $\vp$, Conjecture \ref{valconb} would imply the following conjecture:

\begin{conjecture}\label{valrephrase}  \cite{vali:79-3}
Let $\ell$ be a linear coordinate on $\BC^1$ and consider any linear inclusion
$\BC^1\oplus \BC^{m^2}\ra W=\BC^{n^2}$, so in particular $\ell^{n-m}\tperm_m\in S^nW$.
Let $n(m)$ be a polynomial. Then for all sufficiently large $m$, 
$$
[\ell^{n-m}\tperm_m]\not\in  \tend(W) \cdot [\tdet_{n(m)}] .
$$
\end{conjecture}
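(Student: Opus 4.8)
The plan is to deduce Conjecture~\ref{valrephrase} from the stronger geometric statement that $[\ell^{n-m}\tperm_m]$ lies outside the $GL_{n^2}$-orbit \emph{closure} $\overline{GL_{n^2}\cdot[\tdet_{n(m)}]}$: if $\phi\in\tend(W)$ is any (possibly singular) endomorphism with $\tdet_{n(m)}\circ\phi\neq 0$, then $\phi$ is a limit of invertible maps and $[\tdet_{n(m)}\circ\phi]$ is a limit of points of $GL_{n^2}\cdot[\tdet_{n(m)}]$, so $\tend(W)\cdot[\tdet_{n(m)}]\subseteq\overline{GL_{n^2}\cdot[\tdet_{n(m)}]}$. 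Following the Geometric Complexity Theory program of Mulmuley and Sohoni, I would then decompose the homogeneous coordinate rings $\BC[\overline{GL_{n^2}\cdot[\tdet_{n(m)}]}]$ and $\BC[\overline{GL_{n^2}\cdot[\ell^{n-m}\tperm_m]}]$ into irreducible $GL_{n^2}$-modules $S_\lambda\BC^{n^2}$ and try to exhibit, for each polynomial bound $n(m)$ and all large $m$, a partition $\lambda$ whose multiplicity in the second ring strictly exceeds its multiplicity in the first. Any such \emph{multiplicity obstruction} forces $[\ell^{n-m}\tperm_m]\notin\overline{GL_{n^2}\cdot[\tdet_{n(m)}]}$ and hence proves the conjecture.

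The first substantive step is to obtain an upper bound on the determinant-side multiplicities. The orbit $GL_{n^2}\cdot[\tdet_n]$ is $GL_{n^2}/\widehat H$ for $\widehat H$ the stabilizer of the line $[\tdet_n]$, and by Frobenius's theorem the linear symmetries of $\tdet_n$ are generated by the maps $x\mapsto AxB^{T}$ with $\det(A)\det(B)=1$ together with transposition, so $\widehat H$ is explicitly known. The algebraic Peter--Weyl theorem gives
$$
\BC[GL_{n^2}/\widehat H]\;=\;\bigoplus_\lambda S_\lambda\BC^{n^2}\otimes\bigl(S_\lambda(\BC^{n^2})^{*}\bigr)^{\widehat H},
$$
so the multiplicity of $\lambda$ equals $\dim\bigl(S_\lambda(\BC^{n^2})^{*}\bigr)^{\widehat H}$, which one can attack through Schur--Weyl duality and the branching of $GL_{n^2}$-modules to $GL_n\times GL_n$; and since $\BC[\overline{GL_{n^2}\cdot[\tdet_n]}]\hookrightarrow\BC[GL_{n^2}\cdot[\tdet_n]]$, any bound obtained this way bounds the multiplicities in the orbit closure as well. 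One would then run the analogous computation on the permanent side, where $\widehat H$ is replaced by the symmetry group of $\ell^{n-m}\tperm_m$ — diagonal scalings and row/column permutations of $\tperm_m$, transposition, rescaling of $\ell$, and arbitrary linear action on the unused coordinates — and where the relevant invariant dimensions unfold into plethysm- and Kronecker-type coefficients.

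The hard part — and, in my view, the only part that is genuinely open — is the final comparison. Producing a single $\lambda$ that occurs on the permanent side and is \emph{absent} on the determinant side would not suffice: B\"urgisser, Ikenmeyer and Panova have shown that such pure occurrence obstructions do not exist in this setting, so one is forced to compare actual multiplicities, and to do so for a whole family of $\lambda$ in order to rule out every polynomial $n(m)$ at once. This requires asymptotically sharp estimates for plethysm and Kronecker coefficients — quantities whose growth is still very poorly understood — together with a super-polynomial separation between the two sides as $m\to\infty$, which is essentially the full content of Valiant's conjecture. I therefore expect the representation-theoretic bookkeeping above to be laborious but feasible, while the quantitative separation remains beyond current methods; the present paper's role is the complementary observation that the shifted-partial-derivatives functional, at least, cannot supply the needed obstruction once $n>2m^2+2m$.
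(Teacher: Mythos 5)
There is a fundamental mismatch here: the statement you are asked about is Conjecture \ref{valrephrase}, i.e.\ Valiant's conjecture itself (rephrased via the padded permanent and the orbit of the determinant). The paper does not prove it and does not claim to; its theorem is the \emph{negative} result that one particular technique, shifted partial derivatives, cannot establish the conjecture once $n>2m^2+2m$. So there is no ``paper's proof'' for your proposal to match, and your text cannot be read as a proof either. Your first reduction is fine and standard: since any $\phi\in\tend(W)$ is a limit of invertible maps, $\tend(W)\cdot[\tdet_n]\subseteq\overline{GL_{n^2}\cdot[\tdet_n]}$, so it suffices to show the padded permanent lies outside the orbit closure; and the Peter--Weyl setup with the known stabilizer of $\tdet_n$ correctly describes where multiplicity obstructions would live. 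But the load-bearing step --- exhibiting, for every polynomial $n(m)$ and all large $m$, a partition $\lambda$ whose multiplicity on the permanent side exceeds that on the determinant side --- is exactly the open problem, as you yourself concede. Indeed, after B\"urgisser--Ikenmeyer--Panova ruled out occurrence obstructions, no one knows whether multiplicity obstructions exist at all, let alone how to estimate the relevant plethysm and Kronecker coefficients sharply enough to produce them. A proof outline whose decisive step is ``this remains beyond current methods'' is a research program, not a proof; the gap is the entire content of the conjecture.

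If your intent was instead to summarize the landscape around the paper, your last sentence gets the paper's actual contribution right: Theorem \ref{badnews} shows that for $n>2m^2+2m$ the ranks of the shifted partial derivative maps (equivalently, the Hilbert functions of the Jacobian ideals compared via Macaulay's bound and explicit degenerations of $\tdet_n$) always favor the determinant, so that particular linear-algebraic obstruction can never certify $[\ell^{n-m}\tperm_m]\notin\overline{\tend(W)\cdot[\tdet_n]}$ in that range. But that is a statement about the limits of a method, orthogonal to proving Conjecture \ref{valrephrase}.
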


The polynomial $\ell^{n-m}\tperm_m$ is called the {\it padded permanent}.

Instead of arbitrary circuits, by  \cite{MR0660280,
DBLP:journals/eccc/GuptaKKS13,MR3303254,koirand4,AgrawalVinay} one 
could prove Valiant's conjecture by  restricting
to depth-four circuits 
and proving a stronger lower bound:
If  $P_n\in S^n\BC^N$  is a 
sequence of polynomials  that can be computed by
a circuit of size $s=s(n)$, then
$P_n$ is   computable by a homogeneous $\Sigma\Pi\Sigma\Pi$ circuit of   size $2^{O(\sqrt{n \tlog(ns)\tlog(N)})}$.
So to prove $\vp\neq\vnp$,
it would be sufficient to show the permanent $\tperm_m$ is not computable by a size $2^{O(\sqrt{m \tlog(  poly(m)) })}$
homogeneous $\Sigma\Pi\Sigma\Pi$ circuit.
Work of   Gupta, Kamath, Kayal, and
 Saptharishi \cite{gupta4}   generated considerable excitement,  because it came 
 tantalizingly close to proving Valiant's conjecture by showing that the permanent
does not admit a size $2^{o(\sqrt{m})}$ homogeneous $\Sigma\Pi\Sigma\Pi$ circuit with bottom fanin bounded
by $\sqrt{m}$.

Any method of proof that separates $\vp$ from $\vnp$ would
also have to separate the determinant from the permanent. We show that this
cannot be done with the method of   proof in \cite{gupta4},  the {\it method of shifted partial derivatives}.
This method builds upon
the {\it method of partial derivatives} (see, e.g.,  \cite{MR1486927}), which dates back to Sylvester
\cite{sylvestercat}.

\subsection{The method of shifted partial derivatives}
The space
$S^kW^*$ may be interpreted as  the space of homogeneous differential operators on $Sym(W)$
of order $k$.
Given a homogeneous polynomial
$P\in S^nW$, consider the linear map
\begin{align*}
P_{k,n-k}: S^kW^*&\ra S^{n-k}W\\
D&\mapsto D(P).
\end{align*}
In coordinates  the map is $\frac{\partial^k}{\partial x_{i_1}\cdots 
\partial x_{i_k}}\mapsto \frac{\partial^kP}{\partial x_{i_1}\cdots 
\partial x_{i_k}}$.

    Given polynomials $P,Q\in S^nW$, and $k<n$, 
   $P\in  \tend(W)\cdot Q$ implies that $\trank(P_{k,n-k})\leq \trank(Q_{k,n-k})$.
The method of partial derivatives is to find a $k$ such that 
$\trank(P_{k,n-k})> \trank(Q_{k,n-k})$ to prove $P\not\in \tend(W)\cdot Q$.
   
Now consider $P_{k,n-k}\ot \Id_{S^{\t}W}: S^kW^*\ot S^{\t}W \ra S^{n-k}W\ot S^{\t}W$ and 
project (multiply) the image to $S^{n-k+\t}W$  to obtain a map
\begin{align*}
P_{(k,n-k)[\t]}: S^kW^*\ot S^{\t}W&\ra S^{n-k+\t}W\\
D\ot R&\mapsto D(P)R
\end{align*}
Again 
 $P\in  \tend(W)\cdot Q$ implies that $\trank(P_{(k,n-k)[\t]})\leq \trank(Q_{(k,n-k)[\t]})$. 
 The method of shifted partial derivatives is to find   $k,\t$ such that 
$\trank(P_{(k,n-k)[\t]})> \trank(Q_{(k,n-k)[\t]})$ to prove $P\not\in \tend(W)\cdot Q$.

\begin{remark} Both these methods are {\it algebraic} in the sense that they actually
prove $P\not\in \ol{\tend(W)\cdot Q}$ where the overline denotes Zariski closure.
Most known lower bound techniques for Valiant's conjecture have this property, see
\cite{GrochowGCTUnity}.
\end{remark}

\begin{remark} These methods may be viewed as special cases of the {\it  Young-flattenings}
introduced in \cite{MR3081636}.
\end{remark}

From the perspective of
algebraic geometry,  the method of shifted partial derivatives  compares  growth of {\it Jacobian ideals}:
For $P\in S^nW$, consider the 
ideal in $Sym(W)$  generated by the partial
derivatives of $P$ of order $k$. Call this the {\it $k$-th Jacobian ideal} of $P$,
and denote it by  $\ci^{P,k}$. It is generated in degree $n-k$.
The method is comparing the dimensions of the Jacobian ideals
in degree $n-k+\t$, i.e., the  {\it Hilbert functions} of the Jacobian ideals.

\subsection{Statement of the result}
 We prove this method cannot 
 give better than a quadratic separation of  the permanent from the determinant:
 
\begin{theorem}\label{badnews} There exists a constant $M$ such that for all $m>M$ and every
$n>2m^2+2m$, any $\t$ and any $k<n$, 
$$\trank((\ell^{n-m} \tperm_m)_{(k,n -k)[\t]})  <\trank((\tdet_{n})_{( k,n  -k )[\t]}).
$$
\end{theorem}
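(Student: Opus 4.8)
The plan is to prove the two inequalities
\[
\trank\bigl((\ell^{n-m}\tperm_m)_{(k,n-k)[\t]}\bigr)\ \le\ U(n,m,k,\t)\ <\ L(n,k,\t)\ \le\ \trank\bigl((\tdet_n)_{(k,n-k)[\t]}\bigr)
\]
by producing an explicit upper bound $U$ for the padded permanent side and invoking the explicit lower bound from \cite{gupta4} for the determinant side. For the lower bound $L$ on the determinant, I would quote directly the estimate of Gupta--Kamath--Kayal--Saptharishi on $\dim \ci^{\tdet_n,k}$ in degree $n-k+\t$ (this is the ``lower bound estimate from \cite{gupta4}'' the abstract advertises), which is essentially a binomial-coefficient expression. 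The real work is the upper bound $U$ for $\trank((\ell^{n-m}\tperm_m)_{(k,n-k)[\t]})$, i.e.\ for the dimension in degree $n-k+\t$ of the ideal generated by the order-$k$ partial derivatives of $\ell^{n-m}\tperm_m$.

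\smallskip
\textbf{Bounding the padded permanent side.}
The key structural observation is that a partial derivative of order $k$ of $\ell^{n-m}\tperm_m$ can only differentiate in the single padding variable $\ell$ and in the $m^2$ permanent variables $y^i_j$; derivatives in any of the remaining $n^2-m^2-1$ variables of $W$ vanish. Splitting a $k$-th order operator $D$ as $D = \partial_\ell^{\,a}\, D'$ with $D'$ of order $k-a$ in the $y$'s, one gets $D(\ell^{n-m}\tperm_m) = c\,\ell^{n-m-a}\,D'(\tperm_m)$, where $D'(\tperm_m)$ is (a scalar times) a sub-permanent $\tperm_{m-(k-a)}$ — a polynomial of degree $m-(k-a)$ in at most $m^2$ variables, of which there are at most $\binom{m}{k-a}^2$ distinct ones. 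Hence the space of $k$-th partials of the padded permanent is spanned by at most $\sum_{a}\binom{m}{k-a}^2$ elements of the form $\ell^{n-m-a}\cdot(\text{sub-permanent})$, and in fact, absorbing powers of $\ell$, the whole Jacobian ideal in degree $n-k+\t$ is contained in $\ell^{\,n-m-k}\cdot Sym(W)_{\le ?}\cdot \langle \tperm_j : j\ge 0\rangle$, so its dimension is at most the dimension of $S^{\t+m-k}$-many shifts times the number of generators. Concretely I would bound
\[
\trank\bigl((\ell^{n-m}\tperm_m)_{(k,n-k)[\t]}\bigr)\ \le\ \binom{2m}{k}\cdot \binom{n^2-1+\t+m-k}{\,n^2-1\,},
\]
(or a mild variant), the first factor counting a spanning set of derivatives up to powers of $\ell$ and the second counting the available shifts into degree $n-k+\t$ inside the polynomial ring on the roughly $n^2$ variables of $W$ — note the power of $\ell$ eats $n-m-k$ degrees of the shift, which is what makes this bound small. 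This is where one uses ``several simple degenerations of the determinant'': one compares not with $\tdet_n$ directly but argues the permanent side is dominated by that of a monomial-type or small-rank degeneration whose Jacobian ideal is transparently small, then uses Macaulay's theorem (the stated growth bound on ideals) to control the Hilbert function in the shifted degree from its value in degree $n-k$.

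\smallskip
\textbf{Comparing the two bounds.}
With $U$ of the above shape and $L$ the \cite{gupta4} lower bound for $\tdet_n$, it remains a purely numerical inequality $U<L$. The determinant lower bound grows like $\binom{n}{k}^2\binom{\text{something}+\t}{\text{something}}$ with the ``something'' of order $n^2$ and the leading binomial $\binom nk^2$ genuinely large, whereas $U$ has leading factor only $\binom{2m}{k}\le \binom{2m}{\lfloor m\rfloor}\approx 4^m$ and a shift-count reduced by the stolen $n-m-k$ degrees of $\ell$. The hypothesis $n>2m^2+2m$ is exactly what is needed to beat $4^m$-type factors by the $\binom nk^2$-type factor and to guarantee the $\ell$-padding genuinely shrinks the effective shift: one checks the inequality first for the worst choice of $k$ (near $k\approx n/2$, or wherever the determinant bound is largest relative to $m$) and then for all $\t$, monotonicity in $\t$ reducing it to the two regimes $\t$ small and $\t$ large. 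The main obstacle I anticipate is not the determinant side (quoted wholesale from \cite{gupta4}) but getting the upper bound $U$ for the padded permanent tight enough: a naive count of all degree-$(n-k+\t)$ multiples of the $\binom{2m}{k}$-ish generators in $n^2$ variables is far too weak, so one must exploit that \emph{every} generator carries the factor $\ell^{\,n-m-k}$, forcing the ideal into $\ell^{\,n-m-k}\cdot(\text{low-degree part})$, and this is precisely the point where Macaulay's theorem is invoked to convert ``small in degree $n-k$'' into ``small in degree $n-k+\t$'' uniformly in $\t$.
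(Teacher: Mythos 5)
Your structural insight about the padded permanent side is correct and is one of the paper's key points: when $k < n-m$ every order-$k$ partial of $\ell^{n-m}\tperm_m$ carries the factor $\ell^{\,n-m-k}$, so the Jacobian ideal in degree $n-k+\t$ sits inside $\ell^{\,n-m-k}\cdot S^{m+\t}W$ and has dimension at most $\binom{n^2+m+\t-1}{m+\t}$. (Note your displayed bound misplaces the $k$: the second factor should count $S^{m+\t}W$, not $S^{\t+m-k}W$, and no $\binom{2m}{k}$ prefactor is needed once you observe the containment of the whole ideal.)

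The genuine gap is on the determinant side. You propose to quote the GKKS lower bound $\binom{n+k}{2k}\binom{n^2+\t-2k}{\t}$ wholesale for all $k$ and $\t$, and you explicitly say you expect the determinant side to be the easy part. It is not: that bound comes from looking only at leading monomials on the diagonal and superdiagonal, and it collapses as $k$ grows toward $n$ (e.g.\ for $k=n-1$ it gives only about $(2n-1)\binom{n^2+\t-2n+2}{\t}$, which is swamped by your permanent-side count for large $\t$, even though the true inequality still holds because $\ci^{\tdet_n,n-1}$ is the full maximal ideal). Also, when $k$ is small but $\t$ is enormous, the GKKS bound again becomes uncompetitive. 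For these reasons the paper splits the proof into four regimes of $(k,\t)$ and uses the GKKS estimate only in one of them ($k<2m$, $\t$ not too large). In the other regimes the lower bound on $\trank((\tdet_n)_{(k,n-k)[\t]})$ comes from entirely different arguments: a block-diagonal degeneration of $\tdet_n$ whose order-$k$ partials already contain all of $S^{n-k}\BC^{m^2+1}$ (large $k$); Corollary~\ref{maccor} to Macaulay's theorem applied to the raw count $\binom{n}{k}^2$ of minors (middle $k$); and the degeneration $\tdet_n\rightsquigarrow \ell_1^n+\ell_2^n$, a complete intersection whose Hilbert function is explicit (small $k$, large $\t$). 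Your description of the degenerations is also inverted: they are used to \emph{lower} bound the determinant side, not to dominate the permanent side. So while your upper-bound mechanism is on target, you are missing the case analysis that is the real content of the proof, and without it the comparison $U<L$ simply fails for large $k$ or large $\t$.
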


Despite this, it may be possible that a more general Young flattening is able to prove, e.g. a
$\o(m^2)$ lower bound on $n$. This motivated the companion paper \cite{ELSWmfr} where we study Jacobian
ideals and their minimal free resolutions.

\subsection{Overview of the proof}
The proof of Theorem \ref{badnews}  splits into four cases:
\begin{itemize}
\item (C1) Case $k>n-\frac n{m+1}$,
\item (C2) Case $2m\leq k\leq n-2m$,
\item (C3) Case $k<2m$ and $\t> \frac 32 n^2m$,
\item (C4) Case $k<2m$ and $\t<6\frac {n^3}m$.
\end{itemize}
Note that C1,C2 overlap when $n>2m^2+2m$ and 
C3,C4 overlap when $n>\frac{m^2}4$, so it suffices to take $n>2m^2+2m$.

\medskip

  In the first case, the proof has nothing to do with
the padded permanent or its derivatives: it is valid for any   polynomial in $m^2+1$ variables.
Cases 2,3 only use that we have a padded polynomial. 
In the fourth case, the only property of the permanent that is used is an estimate on the size of
the space of its partial derivatives. Case C1 is proved by showing that in this range the partials of the determinant can be degenerated
into the space of {\it all} polynomials of degree $n-k$ in $m^2+1$ variables.  
Cases C2,C3  use  that when $k<n-m$,    the Jacobian ideal of  {\it any} padded polynomial $\ell^{n-m}P\in S^nW$ is contained in
the ideal generated in degree $n-m-k$  by $\ell^{n-m-k}$, which has slowest possible growth
by Macaulay's theorem as explained below. Case C2 compares that ideal with the Jacobian
ideal of the determinant; it is smaller in degree $n-k$ and therefore smaller
in all higher degrees by Macaulay's theorem.
Case C3 compares that ideal with an ideal with just two generators in degree $n-k$.
Case C4 uses a lower bound for the determinant used in \cite{gupta4} and 
  compares it with a very crude upper bound for the dimension of the space of shifted partial
  derivatives for the permanent.

    We first review Macaulay's theorem and then prove each case.
    
    \medskip

We will use the   notation:
\begin{align*}
t&=\t+n-k.
\end{align*}
Fix index ranges $1\leq s,t,\leq n$, and $1\leq i,j\leq m$. 
If $\ci\subset Sym(W)$ is an ideal, we let $\ci_d\subset S^dW$ denote its component in degree $d$. 

We make repeated use of the estimate 
\be\label{lnnfac}
\tln(q!)=q\tln(q)-q +\TH (\tln(q)).
\ene

\subsection{Acknowledgments}  Efremenko, Landsberg and Weyman thank
the Simons Institute for the Theory of Computing, UC Berkeley, for providing a wonderful 
environment during the fall 2014 program {\it Algorithms and Complexity in Algebraic Geometry} to work on  this article.
We also thank the anonymous referee of a previous version  for pointing out a gap in our argument.

\section{Macaulay's Theorem}
We only use Corollary \ref{maccor} from this section in the proof of Theorem \ref{badnews}.

\begin{theorem} [Macaulay,    see, e.g.,  \cite{MR1648665}] \label{macthm}
Let $\ci\subset Sym(W)$ be an ideal, let  $d$
be a natural number, and set $Q=\tdim S^{d}V/\ci_{d}$. Write 
\be\label{qex}
Q=\binom {a_{d}}{d}+\binom{a_{{d}-1}}{{d}-1}+\cdots +\binom{a_\d}{\d}
\ene
with $a_{d}>a_{{d}-1}>\cdots >a_{\d}$ (such an  expression exists and is unique).
Then
\be\label{macbnd}
  \tdim\ci_{d +\t}\geq 
\binom{N+d+\t-1}{d+\t}
-\left[  \binom {a_{d}+\t}{{d}+\t}+\binom{a_{{d}-1}+\t}{{d}+\t-1 }
+\cdots +\binom{a_\d+\t}{\d+\t} \right].
\ene
\end{theorem}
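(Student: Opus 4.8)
The plan is to reduce Macaulay's bound to the extremal combinatorics of lexicographic monomial ideals. First I would pass to the initial ideal $\mathrm{in}(\ci)$ with respect to a degree term order; since degeneration to the initial ideal preserves the Hilbert function, $\tdim\ci_e=\tdim(\mathrm{in}\,\ci)_e$ in every degree $e$, so it suffices to treat a monomial ideal $\ci$. For such $\ci$ put $Q_e:=\tdim S^eW/\ci_e$, the number of \emph{standard monomials} of degree $e$, i.e.\ those divisible by no minimal generator; let $B_e\subset S^eW$ denote this set. The key elementary remark is that if a degree-$(e+1)$ monomial $m$ is standard then $m/x_i$ is standard of degree $e$ for every variable $x_i$ dividing $m$ (a generator dividing $m/x_i$ would divide $m$); hence the standard monomials of degree $e+1$ lie in the ``upper shadow'' $\nabla(B_e):=\{m:\deg m=e+1,\ m/x_i\in B_e\text{ for all }x_i\mid m\}$, and so $Q_{e+1}\le|\nabla(B_e)|$.

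The heart of the proof is the combinatorial lemma of Macaulay: among all sets of monomials of degree $d$ in $N$ variables of a fixed cardinality $Q$, the lex-segment --- the $Q$ lexicographically largest such monomials --- maximizes the size of the upper shadow $\nabla$. I would prove this by the classical \emph{compression} argument: iteratively replace, for each variable and each exponent, the relevant sub-family of monomials by the lex-segment of the same size, check that no such move decreases $|\nabla|$, and verify the procedure terminates at a genuine lex-segment. Granting this, one computes $|\nabla(\mathrm{lex})|$ explicitly: writing $Q$ in its $d$-th Macaulay expansion $Q=\binom{a_d}{d}+\binom{a_{d-1}}{d-1}+\cdots+\binom{a_\delta}{\delta}$ with $a_d>\cdots>a_\delta$, an induction on $N$ --- peeling off, via Pascal's identity, the monomials with each fixed power of the last variable --- gives $|\nabla(\mathrm{lex})|=\binom{a_d+1}{d+1}+\binom{a_{d-1}+1}{d}+\cdots+\binom{a_\delta+1}{\delta+1}=:Q^{\langle d\rangle}$. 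This yields the case $\t=1$, namely $Q_{d+1}\le Q^{\langle d\rangle}$.

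For general $\t$ I would iterate. Applying the case $\t=1$ in successive degrees gives $Q_{d+\t}\le(Q_{d+\t-1})^{\langle d+\t-1\rangle}$, and the operator $Q\mapsto Q^{\langle d\rangle}$ is monotone in $Q$ --- immediate from the identity $Q^{\langle d\rangle}=\max_{|B|=Q}|\nabla(B)|$ (the lemma together with the explicit count), since any set of size $Q$ embeds in one of any larger admissible size. Moreover the expression $\binom{a_d+1}{d+1}+\cdots+\binom{a_\delta+1}{\delta+1}$ is \emph{already} a valid $(d+1)$-Macaulay expansion --- its top indices are still strictly decreasing and its bottom indices descend by one from $d+1$ --- so applying $\langle d+1\rangle$ to it merely advances every binomial coefficient once more. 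Combining these, after $\t$ steps one gets $Q_{d+\t}\le\binom{a_d+\t}{d+\t}+\binom{a_{d-1}+\t}{d+\t-1}+\cdots+\binom{a_\delta+\t}{\delta+\t}$, and subtracting this from $\tdim S^{d+\t}W=\binom{N+d+\t-1}{d+\t}$ gives exactly \eqref{macbnd}.

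I expect the main obstacle to be the compression lemma of the second paragraph. The passage to the initial ideal, the shadow inequality, and the explicit lex-segment count are all routine; but proving that the lex-segment is genuinely extremal among families of a given cardinality is the real combinatorial content of Macaulay's theorem, and the careful bookkeeping required by the compression moves is precisely what the reference \cite{MR1648665} supplies rather than something one reproves here.
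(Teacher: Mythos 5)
A preliminary remark on the comparison you were asked to survive: the paper does not prove Theorem \ref{macthm} at all --- it is quoted from the literature (see \cite{MR1648665}), and only Corollary \ref{maccor} is proved and used later. So your proposal cannot be matched against an in-paper argument; it has to stand on its own as the standard combinatorial proof of Macaulay's theorem. Its architecture --- pass to the initial ideal (Hilbert functions preserved), bound the standard monomials of degree $e+1$ by the upper shadow of those of degree $e$, identify the extremal family of a given cardinality, compute its shadow from the Macaulay expansion, and iterate using that the resulting expression is again a valid expansion in degree $d+1$ --- is indeed the classical route, and your iteration and monotonicity steps are sound as reductions.

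However, your key lemma is stated backwards, and as stated it is false: the $Q$ lexicographically \emph{largest} monomials of degree $d$ do not maximize the upper shadow $\nabla$ as you define it. Take $N=3$, $d=2$, $Q=3$ with $x>y>z$: the lex-largest set $\{x^2,xy,xz\}$ has upper shadow $\{x^3,x^2y,x^2z\}$ of size $3$, whereas the lex-\emph{smallest} set $\{y^2,yz,z^2\}$ (the degree-$2$ complement of the lex-segment ideal $(x)$) has upper shadow $\{y^3,y^2z,yz^2,z^3\}$ of size $4=\binom{4}{3}$, which is exactly the Macaulay bound for $Q=3=\binom{3}{2}$. The correct statement is the dual one: on the ideal side the lex-segment (lex-largest monomials) minimizes $\dim S^1W\cdot \ci_d$; equivalently, on the quotient side it is the $Q$ lex-smallest monomials that maximize $|\nabla(B)|$, and your explicit count $\binom{a_d+1}{d+1}+\cdots+\binom{a_\delta+1}{\delta+1}$ is valid only for that family (your own example above gives $3\neq 4$ for the lex-largest one). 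With this fix the remainder of your outline goes through and yields \eqref{macbnd}. Note also that you defer the genuine combinatorial content --- the compression argument showing the (corrected) extremal family really is extremal --- to the very reference the paper cites, so what you have is a reduction of \eqref{macbnd} to the standard growth lemma rather than a self-contained proof; that is consistent with how the paper treats the theorem, but it should be flagged as such.
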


\begin{remark} Gotzman \cite{MR0480478} showed that if $\ci$ is generated in degree at most $d$,
then equality  is achieved for all $\t$
in \eqref{macbnd} if equality holds for $\t=1$.
Ideals  satisfying this minimal growth exist, they
are {\it lex-segment ideals}, see  \cite{MR1648665}.
\end{remark}

\begin{remark}Usually Macaulay's theorem is stated in terms of the coordinate
ring $\BC[X]:=Sym(W)/\ci$ of the variety (scheme) $X\subset   W^*$ that is the zero set of $\ci$, namely
$$
\tdim \BC[X]_{d+\t}\leq 
\binom {a_{d}+\t}{{d}+\t}+\binom{a_{{d}-1}+\t}{{d}+\t-1 }
+\cdots +\binom{a_\d+\t}{\d+\t}.
$$
\end{remark}

\begin{corollary}\label{maccor} Let $W=\BC^N$.
Let $\ci$ be an ideal such that $\tdim \ci_d\geq \tdim S^{d-q}W=\binom{N+d-q-1}{d-q}$ for
some $q< d$. Then $\tdim \ci_{d+\t}\geq \tdim S^{d-q+\t}W=\binom{N+\t+d-q-1}{\t+d-q}$.
\end{corollary}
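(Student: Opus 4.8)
The plan is to apply Macaulay's theorem (Theorem~\ref{macthm}) to $\ci$ in degree $d$, using that the hypothesis says precisely that $\ci$ has, in degree $d$, at least as many elements as the principal monomial ideal $\mathfrak a:=(x_1^q)\subset Sym(W)$: since $\mathfrak a_e=x_1^q\cdot S^{e-q}W$ for every $e\geq q$, we have $\tdim\mathfrak a_e=\tdim S^{e-q}W=\binom{N+e-q-1}{e-q}$, so the hypothesis reads $\tdim\ci_d\geq\tdim\mathfrak a_d$. Because $\mathfrak a$ is a lex-segment ideal it has the slowest possible growth (Gotzmann \cite{MR0480478}, \cite{MR1648665}), so one expects $\mathfrak a$ to be extremal for exactly this comparison and the corollary to fall out of \eqref{macbnd}.

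Concretely, I would set $Q=\tdim S^dW/\ci_d$ and $B=\tdim S^dW/\mathfrak a_d=\binom{N+d-1}{d}-\binom{N+d-q-1}{d-q}$, so that the hypothesis becomes $Q\leq B$. Iterating Pascal's rule $\binom{a}{b}=\binom{a-1}{b}+\binom{a-1}{b-1}$ a total of $q$ times on $\binom{N+D-1}{D}$ gives the telescoping identity
\be\label{telescope}
\binom{N+D-1}{D}-\binom{N+D-q-1}{D-q}=\sum_{e=D-q+1}^{D}\binom{N+e-2}{e},
\ene
valid for every $D>q$; in particular $B=\sum_{e=d-q+1}^{d}\binom{N+e-2}{e}$. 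Assuming $N\geq 2$ (the case $N=1$ being trivial), the exponents $a_e:=N+e-2$ are strictly decreasing in $e$ with $a_{d-q+1}\geq d-q+1$, so this sum is exactly the (unique) Macaulay expansion \eqref{qex} of $B$. The growth operator occurring in the bracket of \eqref{macbnd} is monotone non-decreasing in its argument (a standard property of Macaulay representations; see \cite{MR1648665}), so substituting $B$ for $Q$ after applying \eqref{macbnd} to $\ci$ in degree $d$ yields
\be\label{after}
\tdim\ci_{d+\t}\geq\binom{N+d+\t-1}{d+\t}-\sum_{e=d-q+1}^{d}\binom{N+e+\t-2}{e+\t}.
\ene

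To finish I would put $e'=e+\t$ in the subtracted sum of \eqref{after} and apply \eqref{telescope} with $D=d+\t$: that sum equals $\binom{N+d+\t-1}{d+\t}-\binom{N+d-q+\t-1}{d-q+\t}$ (equivalently, it is $\tdim S^{d+\t}W/\mathfrak a_{d+\t}$, which may be read off from Gotzmann's persistence for the minimal-growth ideal $\mathfrak a$). Plugging this into \eqref{after} cancels the two copies of $\binom{N+d+\t-1}{d+\t}$ and leaves $\tdim\ci_{d+\t}\geq\binom{N+d-q+\t-1}{d-q+\t}=\tdim S^{d-q+\t}W$, which is Corollary~\ref{maccor}. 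The only ingredient beyond routine manipulation of binomial coefficients is the monotonicity of the Macaulay growth operator, which is classical; the real work is just to arrange the telescoping identity \eqref{telescope} so that Theorem~\ref{macthm} applies directly.
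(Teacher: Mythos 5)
Your proof is correct and follows essentially the same route as the paper: the paper's proof also starts from the telescoping identity $\binom{N-1+d}{d}=\sum_{j=1}^q\binom{N-1+d-j}{d-j+1}+\binom{N-1+d-q}{d-q}$ (which is your \eqref{telescope} with $e=d-j+1$), recognizes the sum as a Macaulay expansion, shifts each term by $\t$, and retelescopes. You are a bit more explicit than the paper in naming the lex-segment ideal $(x_1^q)$ and in invoking the monotonicity of the Macaulay growth operator --- a step the paper uses silently when it replaces $Q=\tdim S^dW/\ci_d$ by the larger quantity $Q_d$ --- but the underlying argument is identical.
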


\begin{proof}
First use the identity
\be\label{binomid}
\binom{a+b}{b}=\sum_{j=1}^q\binom{a+b-j}{b-j+1}+\binom{a+b-q}{b-q}
\ene
with  $a=N-1$, $b=d$.
Write this as 
$$
\binom{N-1+d }{d }= Q_{d} + \binom{N-1+d-q }{d-q }.
$$
Set
$$
Q_{d+\t}:= \sum_{j=1}^q\binom{N-1+d +\t-j}{d +\t-j+1}.
$$
By Macaulay's theorem, any ideal $\ci$ with 
$$
\tdim \ci_{d}\geq \binom{N-1+d-q }{d-q }
$$
  must satisfy 
$$
\tdim \ci_{ d+\t}\geq \binom{N-1+d+\t }{d +\t}- Q_{d+\t}=\binom{N-1+d-q+\t}{d-q+\t}.
$$
\end{proof}

We will use Corollary \ref{maccor} with $N=n^2$, $d=n-k$, and $d-q=m$.

\section{Case C1}
Our assumption is $(m+1)(n-k)<n$. 
It will be sufficient to show that some $R\in \tend(W)\cdot \tdet_n$ satisfies
$\trank((\ell^{n-m} \tperm_m)_{(k,n -k)[\t]})  <\trank(R_{k,n -k[\t]}) 
$.
Block the matrix $x=(x^s_t)\in \BC^{n^2}$ as a union of $n-k$ $m\times m$ blocks
in the upper-left corner plus the remainder, which by our assumption includes at least $n-k$ elements on the diagonal.
Set each   diagonal block to  the matrix $(y^i_j)$
(there are $n-k$ such blocks), fill the remainder of the diagonal with $\ell$
(there are at least $n-k$ such terms),
and fill the remainder of the matrix with zeros.
Let $R$ be the restriction of the determinant to this subspace.
Then   the space of partials of $R$ of degree $n-k$, 
  $R_{k,n-k}(S^n\BC^{n^2 *})\subset S^{n-k}\BC^{n^2}$ contains
  a space isomorphic to $S^{n-k}\BC^{m^2+1}$, and  $\ci^{\ell^{n-m}\tperm_m,k}_{n-k}\subset S^{n-k}\BC^{m^2+1}$ so we conclude.

\begin{example}Let $m=2$, $n=6$. 
The  matrix is
$$
\begin{pmatrix}
y^1_1 & y^1_2 & & & & \\
y^2_1 & y^2_2 & & & & \\
& & y^1_1 & y^1_2 & &   \\
& & y^2_1 & y^2_2 & &   \\
& &   &   &\ell &   \\
& &   &   & &\ell  
\end{pmatrix} .
$$
 The polynomial $(y^1_1)^2$ is the image
of 
$\frac{\partial^4}{\partial x^2_2\partial x^4_4\partial x^5_5\partial x^6_6}$
and the polynomial $y^1_2y^2_2$ is the image of $\frac{\partial^4}{\partial x^2_1\partial x^3_3\partial x^5_5\partial x^6_6}$.
\end{example}

\section{Case C2}
As long as $k<n-m$, $\ci^{\ell^{n-m}\tperm_m, k}_{n-k}\subset \ell^{n-m-k}\cdot S^mW$,
so 
\be\label{permesta}
\tdim \ci^{\ell^{n-m}\tperm_m, k}_{n-k+\t}\leq\binom{n^2+m+\t-1}{m+\t }.
\ene
By Corollary \ref{maccor}, it will be sufficient to show that
\begin{equation} \label{eq:2} 
\tdim \ci^{\tdet_{n},k}_{n-k}={n\choose k}^2\geq \tdim S^m W={n^2+m-1 \choose m}.
\end{equation}
In the range  $2m \leq k \leq n-2m$, the quantity $n\choose k$ is minimized at $k=2m$ and $k=n-2m$, so     it is enough to show that 
\be\label{esta}
{n\choose 2m}^2  \geq  {n^2+m-1 \choose m}.
\ene  
Using \eqref{lnnfac}
\begin{align*}
 \tln{n\choose 2m}^2&= 2[n\tln(n)-2m\tln(2m) -(n-2m)\tln(n-2m)-\Th(\tln(n))\\
&=2[n\tln(\frac{n}{n-2m}) +2m\tln(\frac{n-2m}{2m})] -\Th(\tln(n))\\
&=-4m+ m\tln(\frac{n}{2m}-1)^4-\Th(\tln(n)),
\end{align*}
 where to obtain the last line we used $(1-\frac{2m}{n})^{n} > e^{-2m}e^{\Th(\frac{m^2}n)}$, and
\begin{align*}
 \tln {n^2+m-1 \choose m}&= (n^2+m-1)\tln(n^2+m-1) -m\tln(m) - (n^2-1)\tln(n^2-1)-\Th(\tln(n))\\
&= (n^2-1)\tln(\frac{n^2+m-1}{n^2-1} + m\tln(\frac{n^2+m-1}{m})-\Th(\tln(n))\\
&= m\tln(\frac{n^2}m-\frac{m-1}m)+m-\Th(\tln(n)).
\end{align*}
So \eqref{esta} will hold when $(\frac{n}{2m}-1)^4> \tln(e^5) (\frac{n^2}m-\frac{m-1}m)$ which holds for all sufficiently large
$m$ when $n>m^2$.

\section{Case  C3}
Here we simply degenerate $\tdet_n$  to $R=\ell_1^n+\ell_2^n$ by e.g., setting all diagonal elements to $\ell_1$,
all the sub-diagonal elements to $\ell_2$ as well as the $(1,n)$-entry, and setting all other elements of the matrix
to zero.
Then  $\ci^{R,k}_{n-k}=\tspan\{\ell^{n-k}_1,\ell^{n-k}_2\}$.
In degree $n-k+\t$, this ideal consists of
all polynomials of the form $\ell_1^{n-k}Q_1+\ell_2^{n-k}Q_2$ with $Q_1,Q_2\in S^\t \BC^{n^2}$,
which has dimension $2\dim S^\t\BC^{n^2}- \tdim S^{\t-(n-k)}\BC^{n^2}$ because the polynomials
of the form $\ell_1^{n-k}\ell_2^{n-k}Q_3$ with $Q_3\in S^{\t-(n-k)}\BC^{n^2}$ appear in both terms.
By this discussion, or simply because   this is a complete intersection ideal,  we have
 
\be\label{degentwo}
\tdim \ci^{R, k}_{n-k+\t} = 2\binom{n^2+ \t  -1}{\t }-\binom{n^2+\t- (n-k)-1}{\t- (n-k)}.
\ene

We again use the estimate \eqref{permesta} from Case C2, so we need to show
$$
2\binom{n^2+ \t-1}{\t}
  - \binom{n^2+\t+m-1}{\t+m}-\binom{n^2+\t-  (n-k) -1}{\t-  (n-k) }>0.
$$
Divide by $\binom{n^2+ \t-1}{\t}$. We need
\begin{align}
2>&\Pi_{j=1}^{m }\frac{n^2+\t+m-j}{\t+m-j} +\Pi_{j=1}^{n-k }\frac{\t-j}{n^2+\t-j}\\
&= \Pi_{j=1}^{m }(1+ \frac{n^2}{\t+m-j}) +\Pi_{j=1}^{n-k}(1- \frac{n^2}{n^2+\t-j})
\end{align}
The second line is less  than 
\be\label{star}
(1+\frac{n^2}{\t })^m +(1- \frac{n^2}{n^2+\t -1})^{n-k}.
\ene
We analyze \eqref{star} as a function of $\t$.  
Write $\t=n^2m\d$, for some constant $\d$. 
Then \eqref{star} is bounded above by 
$$
e^{\frac 1\d} + e^{\frac 2\d -\frac n {m\d} }.
$$
The second term goes to zero for large $m$, so we   just
need   the first term to be less than $2$, so we take, e.g. $\d=\frac 32$.
 
\section{Case C4}
We use a lower bound on $\ci^{\tdet_n,k}_{n-k+\t}$ from \cite{gupta4}:
Given a polynomial $f$ given in coordinates, its {\it leading monomial} 
in some order, is the monomial in its expression that is highest in the order.
If an ideal is generated by $f_1\hd f_q$ in degree $n-k$, then in degree $n-k+\t$, its
 dimension is at least the number of monomials in degree $n-k+\t$ that contain
a leading monomial from one of the $f_j$.

If we order the variables in $\BC^{n^2}$ by $x^1_1>x^1_2> \cdots > x^1_n>x^2_1>\cdots >x^n_n$,
then the leading monomial of any minor is the product of the elements on the principal
diagonal. Even   this  is difficult to estimate, so in \cite{gupta4} they restrict
further to only look at leading monomials among the variables
on the diagonal and super diagonal: $\{ x^1_1\hd x^n_n, x^1_2,x^2_3\hd x^{n-1}_n\}$.
Among these, they compute that the number of leading monomials of degree $n-k$  is
$\binom{n+k}{2k}$.
Then then show that in degree $n-k+\t$ the dimension of this ideal is bounded below
by $\binom{n+k}{2k}\binom{n^2+\t-2k}{\t}$ so we conclude
\be\label{permlowershift}
\tdim \ci^{\tdet_n,k}_{n-k+\t}\geq
\binom{n+k}{2k}\binom{n^2+\t-2k}{\t}.
\ene

We compare this with the very crude estimate 
$$  \tdim \ci^{\ell^{n-m}\tperm_m,k}_{n-k+\t}
\leq \sum_{j=0}^k{\binom mj}^2\binom{n^2+\t-1}{\t},
$$
where $\sum_{j=0}^k{\binom mj}^2$ is the dimension of the space
of partials of order $k$ of $\ell^{n-m}\tperm_m$, and 
the $\binom{n^2+\t}{\t}$ is what one would have if there were
no syzygies (relations among the products).

We have
\begin{align}
\label{ea} \tln\binom{n+k}{2k}&=n\tln\frac{n+k}{n-k} +k\tln\frac{n^2-k^2}{4k^2}+\Th(\tln(n))\\
\nonumber &= k\tln\frac{n^2-k^2}{4k^2}+\Th(\tln(n))\\
\label{ec}
\tln \frac{ \binom{n^2+\t-2k}{ \t} }{  \binom{n^2+\t-1}{\t} }
&= n^2\tln\frac{(n^2+\t-2k)(n^2-1)}{(n^2-2k)(n^2+\t-1)}
+\t\tln\frac{n^2+\t-2k}{n^2+\t-1} +2k\tln\frac{n^2-2k}{n^2+\t- 2k} +\Th(\tln(n))\\
\nonumber
&= -2k\tln(\frac{\t}{n^2}+1) +\Th(\tln(n)),
\end{align}
where the second lines of expressions \eqref{ea},\eqref{ec} hold because
  $k<2m$. 
We split this into two sub-cases: $k\geq \frac m2$ and $k< \frac m2$.

\subsection{Subcase $k\geq \frac m2$}
In this case we have  
$\sum_{j=0}^k{\binom mj}^2<\binom {2m}m$.
We show 
the ratio  
\be\label{trivhavec}
\frac{\binom{n+k}{2k}\binom{n^2+\t-2k}{ \t}}
{ 
 \binom {2m}m \binom{n^2+\t-1}{\t}
} 
\ene
is greater than one. 
Now
\begin{align}\label{ebb}
 \tln   \binom {2m}m  &= m\tln 4 +\Th(\tln(m)). \\
 \nonumber 
 &= k\tln 4^{\frac mk}  +\Th(\tln(m)).
\end{align}
Then \eqref{trivhavec} is greater than one if 
$$
k\tln\left( \frac{n^2-k^2}{4k^2}\frac 1{ (\frac{\t}{n^2}+1)^2} \frac 1{4^{\frac mk}}\right) 
   \pm   \Th(\tln(n))
$$
is positive. 
This will occur if
$$
\frac{n^2-k^2}{4k^2}\frac 1{ (\frac{\t}{n^2}+1)^2} \frac 1{4^{\frac mk}}>1
$$
i.e.,
if
$$
\t < n^2(\frac{\sqrt{n^2-k^2}}{2k 4^{\frac m{2k}}}-1).
$$
Write this   as
\be\label{estbo}
\t<n^2(\frac{n }{2\ep m4^{\frac 1{2\ep}}}-1).
\ene
The worst case is $\ep=2$ where it suffices to take 
$\t<\frac{n^3}{ 6m}$.

\subsection{Subcase $k<\frac m2$}
Here we use that
$\sum_{j=0}^k{\binom mj}^2<k {\binom {m}k}^2$ and the same argument gives
that it suffices to have
$$
\t < n^2(\frac{\sqrt{n^2-k^2}}{2k} \frac 1{\sqrt{\frac mk}-1}-1).
$$
The worst case is $k=\frac m2$, where the
estimate easily holds when $\t<\frac{n^3}{ 6m}$.

\bibliographystyle{amsplain}
 
\bibliography{Lmatrix}

\end{document}